\numberwithin{equation}{section}
\numberwithin{figure}{section}
\numberwithin{table}{section}
\newtheorem{theorem}{Theorem}[section]
\newtheorem{corollary}[theorem]{Corollary}
\newtheorem{proposition}[theorem]{Proposition}
\newtheorem{definition}[theorem]{Definition}
\newtheorem{remark}[theorem]{Remark}
\newtheorem{example}[theorem]{Example}
\newtheorem{problem}[theorem]{Problem}
\begin{document}

\title{Palindrome Partitions and the Calkin-Wilf Tree}
\author{David J. Hemmer}
\address{Department of Mathematical Sciences\\
Michigan Technological University\\ Houghton, MI 49931} \email{djhemmer@mtu.edu}

\author{Karlee J. Westrem}
\address{Department of Mathematical Sciences\\
Michigan Technological University\\ Houghton, MI 49931} \email{kjwestre@mtu.edu}
\date{May, 2024}
\begin{abstract}
There is a well-known bijection between finite binary sequences and integer partitions. Sequences of length $r$ correspond to partitions of perimeter $r+1$. Motivated by work on rational numbers in the Calkin-Wilf tree, we classify partitions whose corresponding binary sequence is a palindrome. We give a generating function that counts these partitions, and describe how to efficiently generate all of them. Atypically for partition generating functions, we find an unusual significance to prime degrees. Specifically, we prove there are nontrivial \textit{palindrome partitions} of $n$ except when $n=3$ or $n+1$ is prime.  We find an interesting new ``branching diagram" for partitions, similar to Young's lattice, with an action of the Klein four group corresponding to natural operations on the binary sequences.
\end{abstract}

\maketitle

\section{Introduction}
Recall that $\lambda=(\lambda_1, \lambda_2, \ldots, \lambda_r)$ is a partition of $n$, denoted $\lambda \vdash n$, if $\lambda_1 \geq \lambda_2 \geq \cdots \geq \lambda_r>0$ are positive integers with $\sum_{i=1}^{r}\lambda_i=n$. To $\lambda$ we associate its \emph{Young diagram} $[\lambda]$. It is well-known that we can encode partitions by infinite binary sequences, unbounded in each direction, which begin with infinitely many zeros and end with infinitely many ones, see for example \cite[Exercise 7.59]{StanleyEC2} or \cite{ArmstrongResultsandconjectures}. This encoding can be used, for example, to prove  the important fact in representation theory of the symmetric group that the $p$-core of a partition is unique. To encode a partition $\lambda$, draw the Young diagram of $\lambda$ with the left hand vertical edge and upper horizontal edge extended to infinity. Label the southeast boundary of the diagram, including the horizontal and vertical rays, with a zero next to each vertical edge and a one next to each horizontal edge. To obtain the corresponding binary sequence $C_\lambda$, simply read the numbers moving from southwest to northeast.  For example in Figure \ref{fig:codingofpartitions} we see that if $\lambda=(5,5,3,3,1)$ then $C_\lambda=\cdots 00101100110011\cdots.$

\begin{figure}[H]

\centering

   \begin{tikzpicture}[scale=0.6]

         \draw  (0,1) -- (0,6);
        \draw  (0,6) -- (5,6);
          \draw  (0,6) -- (8,6);
          \draw  (0,1) -- (0,-2);
        \draw  (5,4) -- (3,4);
        \draw  (3,4) -- (3,2);
         \draw  (3,2) -- (1,2);
          \draw  (1,2) -- (1,1);
        \draw (0,4) grid (5,6);
  \draw (0,2) grid (3,4);
    \draw (0,1) grid (1,2);
    \node at (0,0) [circle,fill,inner sep=1.0pt]{};
        \node at (0,-1) [circle,fill,inner sep=1.0pt]{};
            \node at (0,-1) [circle,fill,inner sep=1.0pt]{};
                        \node at (6,6) [circle,fill,inner sep=1.0pt]{};
                                    \node at (7,6) [circle,fill,inner sep=1.0pt]{};

    \node at (5.7,6.5){1};
        \node at (6.7,6.5){1};
            \node at (7.5,6.5){$\cdots$};
              \node at (5.3,5.5){0};
              \node[text=red] at (5.3,4.5){0};
              \node[text=red] at (5.3,4.5){0};
               \node[text=red] at (4.7,3.7){1};
                 \node[text=red] at (3.7,3.7){1};
                      \node[text=red] at (3.2,3.5){0};
                        \node[text=red] at (3.2,2.5){0};
                         \node[text=red] at (2.7,1.7){1};
                 \node[text=red] at (1.7,1.7){1};
                       \node[text=red] at (1.2,1.5){0};
        \node at (0.7,0.7){1};
    \node at (0.2,0.5){0};
                        \node at (0.2,-0.5){0};
                           \node at (0.2,-1.5){$\vdots$};

    \end{tikzpicture}

\caption{The coding $C_\lambda$ of the partition $\lambda=(5,5,3,3,1)$}
  \label{fig:codingofpartitions}
\end{figure}

After the infinitely many zeros, $C_\lambda$ has an initial one, corresponding to the first rightward move, and a final zero, corresponding to the last upwards move, before the infinite sequence of ones. 

\begin{definition}
\label{def: DefineBlambda}
  Define $B(\lambda)$ to be the finite binary sequence between the initial one and final zero in $C_\lambda$. So in Figure \ref{fig:codingofpartitions}, we see that $B(5,5,3,1)=01100110$, labelled in red.

\end{definition}
 Of course one can easily recover $\lambda$ from $B(\lambda)$. Given a binary sequence $C$ we denote the corresponding partition by $P(C)$, so $\lambda=P(B(\lambda))$. Notice that the partition $\tau=(1)$ corresponds to the empty sequence $B(\tau)=\emptyset$, so $P(\emptyset)=(1)$. There is no binary sequence corresponding to the ``empty partition."

 Suppose we have a sequence $C$ with $A$ zeros and $B$ ones. It is clear from Figure \ref{fig:codingofpartitions}, that $B+1$ is the first part of $P(C)$ and $A+1$ is the number of nonzero parts of $P(C)$.

 There are two natural operations on these binary sequences. The \emph{reverse} sequence just reverses the order in the sequence, and we denote it by $C^r$. The \emph{inverse} of a sequence is obtained by switching zeros and ones, and is denoted $\overline{C}$.

\begin{remark}
\label{remark:operationonpartitions}There is an equivalent way to define $B(\lambda)$ in terms of operations on the Young diagram. Start with the Young diagram of $\lambda$. If $[\lambda]$ has a single box in the last row (i.e. if the last part is equal to 1), then remove it and record a ``0". If not, then remove the entire first column from the Young diagram (i.e. subtract one from every part), and record a ``1". Continue until only one box remains. The sequence you obtain will be $B(\lambda)$.
\end{remark}

Reversing $B(\lambda)$ has a nice geometric description in terms of Young diagrams:

\begin{proposition}
  \label{prop: describereversal}
  Let $\lambda$, $B(\lambda)$, $A$ and $B$ be as above.  Consider the  boxes not in the first row or column as a partition $\tilde{\lambda}$ sitting inside an $A \times B$ rectangle. Replace $\tilde{\lambda}$ with the partition obtained by taking its complement inside the $A \times B$ rectangle and rotating it 180 degrees, while preserving the first row and column of $\lambda$. What remains will be the Young diagram for $P(B(\lambda)^r)$.
\end{proposition}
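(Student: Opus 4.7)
The plan is to view $B(\lambda)$ as the lattice path that traces the southeast boundary of $\tilde\lambda$ inside its $A\times B$ bounding rectangle, and then to observe that reversing the word $B(\lambda)$ corresponds geometrically to rotating this path by $180^\circ$ about the centre of the rectangle.

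First I would unwind what the initial $1$ and final $0$ of $C_\lambda$ mean geometrically. The initial $1$ is the horizontal edge at the bottom of the southmost cell of the first column (the first rightward step of the staircase), and the final $0$ is the vertical edge on the right of the easternmost cell of the first row (the last upward step). Removing these two edges from the SE boundary walk of $[\lambda]$ leaves exactly the staircase that separates the first row and first column from $\tilde\lambda$, that is, the SE boundary of $\tilde\lambda$ inside its $A\times B$ bounding rectangle. After translating the starting point to the origin, $B(\lambda)$ becomes a lattice path from $(0,0)$ to $(B,A)$ using $A$ up-steps (the zeros) and $B$ right-steps (the ones), and $\tilde\lambda$ is precisely the set of unit squares of the rectangle lying weakly northwest of this path.

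Next I would prove the key symmetry. Write $B(\lambda)=b_1 b_2\cdots b_{A+B}$ with successive vertices $p_0=(0,0), p_1,\ldots, p_{A+B}=(B,A)$. The path encoded by $B(\lambda)^r$, traversed from $(0,0)$, has vertices $p_i'=(B,A)-p_{A+B-i}$, so it is the image of the original path under the $180^\circ$ rotation of the $A\times B$ rectangle about its centre $(B/2,A/2)$. This rotation is an involution of the rectangle that interchanges the ``northwest of the path'' region with the ``southeast of the path'' region. Consequently the partition sitting northwest of the new path, which by the previous paragraph is $\widetilde{P(B(\lambda)^r)}$, equals the $180^\circ$ rotation of the complement of $\tilde\lambda$ inside the $A\times B$ rectangle.

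Finally, reversal preserves the number of $0$'s and $1$'s, so $P(B(\lambda)^r)$ has first part $B+1$ and exactly $A+1$ nonzero parts; hence its first row and first column agree with those of $\lambda$. Combined with the preceding paragraph, this is precisely the description claimed in the proposition. The main obstacle I anticipate is the first step: one has to match the indexing of $B(\lambda)$ against the staircase edges carefully enough to see that stripping the initial $1$ and final $0$ really amounts to stripping the first-column and first-row boundary contributions, so that what remains is manifestly the boundary of $\tilde\lambda$ in its bounding rectangle. Once that identification is secured, the rest is a clean lattice-path symmetry that is standard for partitions inside a rectangle.
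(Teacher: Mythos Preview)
Your argument is correct and is essentially the same as the paper's own proof, only spelled out in more detail: the paper simply observes (with reference to the figure) that a $180^\circ$ rotation of the $A\times B$ rectangle sends vertical boundary edges to vertical edges and horizontal ones to horizontal ones, so the edge labels are preserved while their order reverses, which is exactly your lattice-path symmetry formulated more explicitly. Your additional step of checking that stripping the initial $1$ and terminal $0$ leaves precisely the SE boundary of $\tilde\lambda$ in its $A\times B$ rectangle, and that reversal preserves the counts of $0$'s and $1$'s (hence the first row and column), makes rigorous what the paper leaves to the picture.
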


\begin{proof}
This is clear from Figure \ref{fig:reversalsequence}. When we rotate the rectangle by 180 degrees, vertical edges remain vertical, horizontal edges remain horizontal. So we can simply rotate all the labels (in red) and the new partition will be correctly labelled with binary sequence equal to $B(\lambda)^r$. 
\end{proof}

\begin{example}
  \label{ex:exreversal}
  Consider the sequence 010100. Check that for $\lambda=(3,3,3,2,1)$ we have $B(\lambda)=010100$. Following the algorithm in Prop. \ref{prop: describereversal}, we see that the partition $(3,3,2,1,1)$ corresponds to the reverse sequence 001010. This is illustrated  in Figure \ref{fig:reversalsequence}. Notice that $n$ is not typically preserved by this operation.
\end{example}

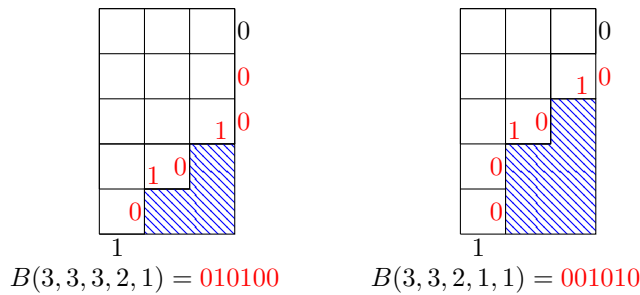
\begin{figure}[H]

\centering

   \begin{tikzpicture}[scale=0.6]

         \draw  (0,0) -- (0,5);
        \draw  (0,5) -- (3,5);
          \draw  (3,5) -- (3,2);
          \draw  (3,2) -- (2,2);
        \draw  (2,2) -- (2,1);
        \draw  (2,1) -- (1,1);
         \draw  (1,1) -- (1,0);
          \draw  (1,0) -- (0,0);
        \draw (0,2) grid (3,5);
  \draw (0,1) grid (2,2);
 \node at (3.2,4.5){0};
    \node[text=red] at (3.2,3.5){0};
    \node[text=red] at (3.2,2.5){0};
     \node[text=red] at (2.7,2.3){1};
     \node[text=red] at (1.8,1.5){0};
      \node[text=red] at (1.2,1.3){1};
   \node[text=red] at (0.8,0.5){0};
  \node at (0.4,-0.3){1};
  \draw[pattern=north west lines, pattern color=blue] (1,0) --(1,1)--(2,1)--(2,2)--(3,2)--(3,0)--(0,0);

     \node at (1,-1){$B(3,3,3,2,1)=\textcolor{red}{010100}$};

  \draw  (8,0) -- (8,5);
    \draw  (8,5) -- (11,5);
      \draw  (11,5) -- (11,4);
      \draw  (11,4) -- (10,4);
      \draw  (10,4) -- (10,2);
      \draw  (10,2) -- (9,2);
       \draw  (9,2) -- (9,0);
         \draw  (9,0) -- (8,0);
         \draw  (8,1) -- (9,1);
  \draw (8,2) grid (10,5);
  \draw (10,3) grid (11,5);

  \node at (11.2,4.5){0};
    \node[text=red] at (11.2,3.5){0};
     \node[text=red] at (10.7,3.3){1};
 \node[text=red] at (9.8,2.5){0};

       \node[text=red] at (9.2,2.3){1};
        \node[text=red] at (8.8,0.5){0};
         \node[text=red] at (8.8,1.5){0};
   \node at (8.4,-0.3){1};
  
    \draw[pattern=north west lines, pattern color=blue] (9,0)--(9,2)--(10,2)--(10,3)--(11,3)--(11,0)--(9,0);

     \node at (9,-1){$B(3,3,2,1,1)=\textcolor{red}{001010}$};

    \end{tikzpicture}

\caption{Calculating the partition corresponding to the reverse sequence}
  \label{fig:reversalsequence}
\end{figure}

\section{The Calkin-Wilf Tree}
We are going to be studying properties of the sequence $B(\lambda)$, and in particular when it is a palindrome. This question was motivated by work of \cite{KenyonCalkinWilfThesis} on binary sequences and the Calkin-Wilf tree. This is an infinite perfect binary tree with each vertex labelled by a rational number. One can also label this tree by binary sequences in a natural way, and also by Young diagrams of partitions. We give a quick description in this section as motivation, but it is not necessary to understand our results.

The Calkin-Wilf tree, $T_\mathbb{Q}$, and corresponding Calkin-Wilf sequence, is defined in \cite{CalkinWilfRecountingTheRationals}, although it has appeared also much earlier. It is an infinite binary tree with each vertex labelled by a positive rational number written in lowest terms. It is defined inductively as follows. The root is $\frac{1}{1}$. The left child of $\frac{p}{q}$ is $\frac{p}{p+q}$ and the right child is $\frac{p+q}{q}$. Part of the tree is shown on the right side of Figure \ref{fig:BinaryTree}. It is not difficult to prove that every positive rational number appears exactly once, and that if $\frac{p}{q}$ is a reduced fraction, then so are both of its children. To obtain the Calkin-Wilf sequence, read left to right top to bottom, obtaining $\{1, \frac{1}{2}, \frac{2}{1}, \frac{1}{3}, \frac{3}{2}, \frac{2}{3}, \frac{3}{1}, \ldots\}$. Let $l(n)$ be the $n$-th term in the Calkin-Wilkin sequence. In \cite{GraverListingthepositiverationals}, the author gives a method to calculate $l(n)$ without generating the entire tree above it. In the opposite direction, in \cite{GoblerListingTheRationals} the author gives a method to start with a positive rational number and locate its position in the sequence, and hence on the tree. One first computes the simple continued fraction expansion of the rational number, and then uses it to read off the binary sequence. For example starting with $\frac{9}{7}$, one obtains:
$$\frac{9}{7}=\textcolor{red}{1}+\frac{1}{\textcolor{red}{3}+\frac{1}{\textcolor{red}{2}}}.$$ The sequence $\{2,3,1\}$ encodes two ``1"s, three ``0"s, one ``1", thus the binary sequence $110001$.

In Figure \ref{fig:BinaryTree} we give, on the left, the infinite binary tree labeled by binary sequences starting with 1. So the root is labelled 1, the left child corresponds to appending a zero and the right child to appending a one to the end of the sequence.

\tikzset{font=\small,
level distance=1.15cm,
every node/.style=
    {align=center,
    text depth = 0pt
    },edge from parent/.style=
    {draw=black!50,
    thick
    }}
\begin{figure}[H]
\centering
\begin{tikzpicture}[scale=1.0]
\Tree [.1
        [.{10}
            [.{100}
                [.{1000} ]
                [.{1\textcolor{red}{001}} ]
            ]
            [.{101}
            [.{1010} ]
                [.{1011} ]
            ] ]
        [.11
            [.{110}
                [.{1100} ]
                [.{1101} ]
                 ]
            [.{111}
                [.{1110} ]
                [.{1111} ]
                ]
            ]
                        ]
]

\end{tikzpicture}\hspace{1cm}
\begin{tikzpicture}[scale=1.0]
\Tree [.1/1
        [.{$1/2$}
            [.{$1/3$}
                [.{$1/4$} ]
                [.{\textcolor{red}{$4/3$}} ]
            ]
            [.{$3/2$}
            [.{$3/5$} ]
                [.{$5/2$} ]
            ] ]
        [.$2/1$
            [.{$2/3$}
                [.{$2/5$} ]
                [.{$5/3$} ]
                 ]
            [.{$3/1$}
                [.{$3/4$} ]
                [.{$4/1$} ]
                ]
            ]
                        ]
]

\end{tikzpicture}
\caption{The Binary Tree and Calkin-Wilf tree}
  \label{fig:BinaryTree}
\end{figure}
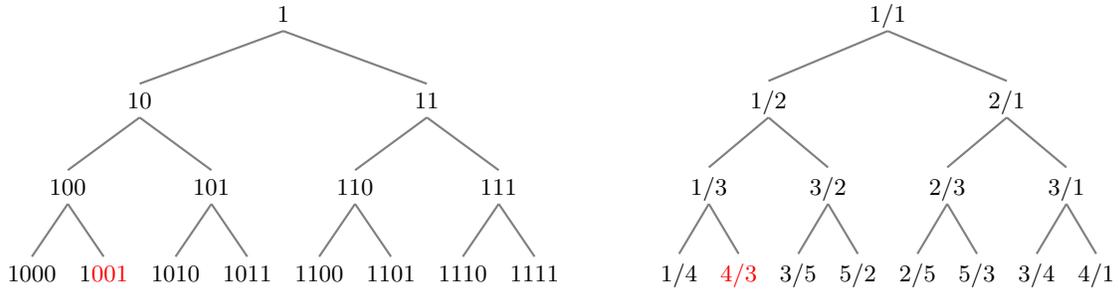

Ignoring the initial 1 on each label, we have a bijection between binary sequences of length $n$ and entries in the tree at level $n+1$. For example the binary sequence 001 corresponds to the fraction $4/3$, as labelled in red.

In \cite{KenyonCalkinWilfThesis}, Kenyon studies the correspondence between  binary sequences and rational numbers illustrated in Figure \ref{fig:BinaryTree}. She calls the sequences ``paths," as they encode the path from the root vertex to the vertex in question, with zero representing left and one representing right. Again note that she does not consider the initial one as part of the path, the rational number $\frac{1}{1}$ corresponds to an empty path.

 It is an easy exercise that the inverse paths (sequences) corresponds to reciprocal fractions. It is not at all clear what reversing the path does to the corresponding rational numbers.

Kenyon gathers data on paths that are equal to their reverse, called \emph{palindromic paths}, and those whose reverse path is equal to the inverse path, called \emph{antipalindromic paths}. So, for example, $0110$ is a palindromic path and corresponds to the rational number $5/7$. The path $0101$ is an antipalindromic path, corresponding to the rational number $8/5$.

We emphasize that the tree on the left of Figure \ref{fig:BinaryTree} is labelled in a way to match the previous literature, but the reader should keep in mind when we discuss paths, palindromic paths, etc. that the initial one is not included.

\subsection{Partitions in the Calkin-Wilf Tree}
From Remark \ref{remark:operationonpartitions} we see there is a natural way to place Young diagrams at the nodes of a binary tree. Start with the partition $\lambda=(1)$ at the root. For any partition $\mu=(\mu_1, \mu_2, \ldots, \mu_t),$ let its left child be $(\mu_1, \mu_2, \ldots, \mu_t,1)$ and its right child be $(\mu_1+1, \mu_2+1, \ldots, \mu_t+1)$. Then following the unique path from a diagram upward to the root will correspond to the operation described in Remark \ref{remark:operationonpartitions}. This is illustrated in Figure \ref{fig:BranchingDiagram}

\begin{figure}[H]
\centering
\begin{tikzpicture}[scale=1.0]
\node at (0,0){\ytableausetup{centertableaux}
  \ydiagram{1}};

\node at (-4,-2){\ytableausetup{centertableaux}\ydiagram{1,1}};

     \node at (-6,-4){\ytableausetup{centertableaux} \ydiagram{1,1,1}};
            \node at (-7,-6){\ytableausetup{centertableaux} \ydiagram{1,1,1,1}};
            \node at (-5,-6){\ytableausetup{centertableaux} \ydiagram{2,2,2}};
            \node at (-4.3,-5.8){0};
            \node[text=red] at (-4.3,-6.3){0};
            \node[text=red] at (-4.3,-6.8){0};
              \node at (-5.3,-7.3){1};
              \node[text=red] at (-4.8,-7.3){1};

                 \node at (-2,-4){\ytableausetup{centertableaux} \ydiagram{2,2}};
            \node at (-3,-6){\ytableausetup{centertableaux} \ydiagram{2,2,1}};
            \node at (-1,-6){\ytableausetup{centertableaux} \ydiagram{3,3}};

\node at (4,-2){\ytableausetup{centertableaux}\ydiagram{2}};
     \node at (2,-4){\ytableausetup{centertableaux} \ydiagram{2,1}};
            \node at (1,-6){\ytableausetup{centertableaux} \ydiagram{2,1,1}};
            \node at (3,-6){\ytableausetup{centertableaux} \ydiagram{3,2}};

     \node at (6,-4){\ytableausetup{centertableaux} \ydiagram{3}};
            \node at (5,-6){\ytableausetup{centertableaux} \ydiagram{3,1}};
            \node at (7,-6){\ytableausetup{centertableaux} \ydiagram{4}};
   \draw  (0,-0.5) -- (-4,-1.5);
   \draw  (0,-0.5) -- (4,-1.5);
   \draw(-4,-3) -- (-2,-3.5);
     \draw(4,-3) -- (2,-3.5);
     \draw(-4,-3) -- (-6,-3.5);
     \draw(4,-3) -- (6,-3.5);

      \draw(-6,-5.2) -- (-7,-5.4);
      \draw(-6,-5.2) -- (-5,-5.4);

       \draw(-2,-5.2) -- (-3,-5.4);
      \draw(-2,-5.2) -- (-1,-5.4);

       \draw(2,-5.2) -- (1,-5.4);
      \draw(2,-5.2) -- (3,-5.4);
       \draw(6,-5.2) -- (5,-5.4);
      \draw(6,-5.2) -- (7,-5.4);

\end{tikzpicture}
\caption{Partitions on the Calkin-Wilf Tree}
  \label{fig:BranchingDiagram}
\end{figure}
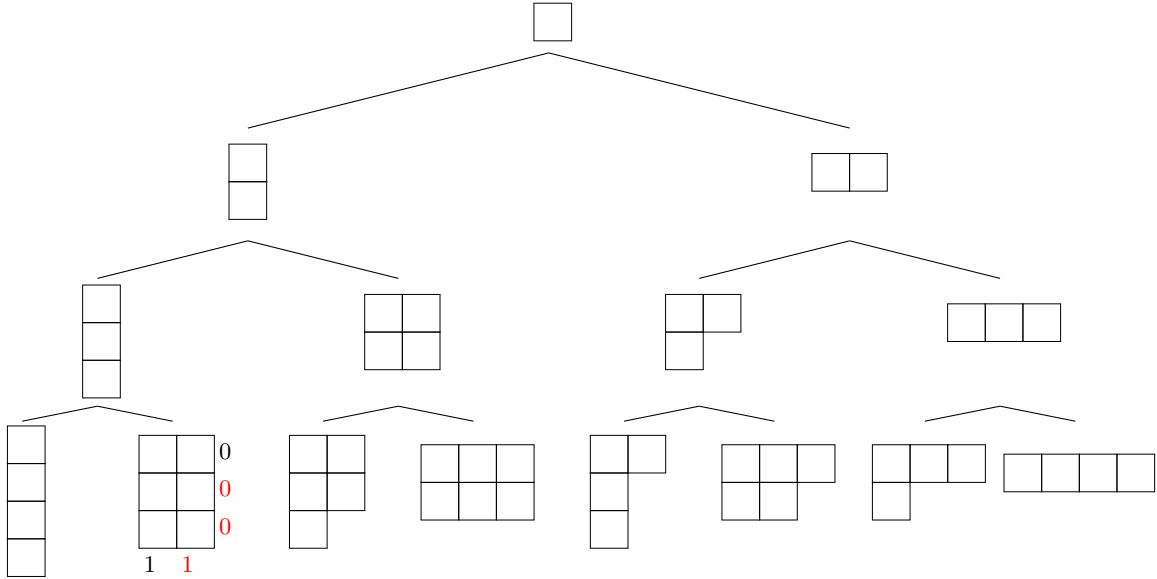

\begin{remark}
\label{remark:youngdiagramoppositeorder}
For the Young diagrams in Figure \ref{fig:BranchingDiagram}, the diagram $[\lambda]$ occupies the same spot as the binary sequence which is $B(\lambda)^r$. This is because we defined $B(\lambda)$ reading from southwest to northeast to match the existing literature. For example if $\lambda=(2,2,2)$ we see that $B(\lambda)=100$ but $\lambda$ occupies the same position as $001$ in the tree. The choice of zero or one for vertical/horizontal moves is arbitrary, as is the choice of which direction to read the sequence off, but the results are all comparable independent of these choices.
\end{remark}

For a partition $\lambda \vdash n$, recall the conjugate or transpose partition $\lambda' \vdash n$ can be obtained by swapping rows and columns in the Young diagram. For a box $(i,j)$ in the Young diagram, the $(i,j)$ hook length is defined as $\lambda_i -i +\lambda'_j-j+1$.

\begin{proposition}
  \label{prop:basicpropertiesofYoungbinarytree}
  \begin{enumerate}
    \item Let $\lambda \vdash n$. Then $B(\lambda')=\overline{B(\lambda)^r}$ is the inverse of the reverse of $B(\lambda)$. Thus, antipalindromic paths correspond to self-conjugate partitions $\lambda=\lambda'$.
    
  \item Given $\lambda$ and $B(\lambda)$, to find the partition $P(B(\lambda)^r)$ corresponding to the reverse sequence of $B(\lambda)$ one can use the algorithm described in Prop. \ref{prop: describereversal}.
    \item Row $t$ of Figure \ref{fig:BranchingDiagram} contains all partitions with first column hook length $h_{11}(\lambda)=t$. This value is also sometimes called the perimeter of the partition.

  \end{enumerate}
\end{proposition}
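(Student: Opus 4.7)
The plan is to address the three parts separately, since they are essentially independent, with part (2) requiring nothing new.

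For part (1), I would start at the level of the full infinite coding $C_\lambda$. Transposing the Young diagram swaps horizontal and vertical boundary edges (hence swaps 0s and 1s in the labelling), and it reverses the orientation of the southeast boundary: what was read from southwest to northeast for $\lambda$ is read from northeast to southwest for $\lambda'$. Together these give $C_{\lambda'} = \overline{C_\lambda^{\,r}}$ as bi-infinite sequences. I would then check the endpoints: under $C \mapsto \overline{C^{\,r}}$, the unique initial $1$ of $C_\lambda$ (the first rightward step after the infinite string of $0$s) is sent to the unique final $0$ of $C_{\lambda'}$, and the unique final $0$ of $C_\lambda$ is sent to the unique initial $1$ of $C_{\lambda'}$. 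Hence the trimming operation that produces $B$ is compatible with reverse-and-invert, yielding $B(\lambda') = \overline{B(\lambda)^{\,r}}$. The antipalindromic statement is then immediate: $B(\lambda)^{\,r} = \overline{B(\lambda)}$ is equivalent to $\overline{B(\lambda)^{\,r}} = B(\lambda)$, i.e.\ $B(\lambda') = B(\lambda)$, which forces $\lambda = \lambda'$.

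Part (2) is a direct restatement of Proposition \ref{prop: describereversal} and requires no separate argument.

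For part (3), I would induct on depth, using the two branching operations defined just above Figure \ref{fig:BranchingDiagram}. Recall that for $\mu$ with first part $\mu_1$ and number of parts $t$, the first-column hook length is $h_{11}(\mu) = \mu_1 + t - 1$. The left child $(\mu_1,\dots,\mu_t,1)$ keeps $\mu_1$ fixed and has $t+1$ parts, while the right child $(\mu_1+1,\dots,\mu_t+1)$ has $t$ parts but first part $\mu_1 + 1$. In both cases the new first-column hook length is $\mu_1 + t = h_{11}(\mu) + 1$. Since the root $(1)$ has $h_{11} = 1$ and lies in row $1$, every partition appearing in row $t$ has $h_{11} = t$. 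To get the converse (every such partition occurs there), I would invoke a counting argument: row $t$ of the binary tree has $2^{t-1}$ nodes, and the number of partitions with $h_{11}(\lambda) = t$ — equivalently of perimeter $t$ — is exactly $2^{t-1}$, by the bijection with binary sequences of length $t-1$ discussed in the introduction (the length of $B(\lambda)$ is $\lambda_1 + \lambda_1' - 2 = h_{11}(\lambda) - 1$). Injectivity of the labelling (distinct nodes give distinct partitions) then forces the map from row $t$ onto partitions with $h_{11} = t$ to be a bijection.

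The only slightly subtle step is keeping track of the effect on endpoints in part (1), since the outer infinite strings of $0$s and $1$s also swap under the reverse-and-invert operation; I would include a one-sentence verification that this swap preserves the defining boundary markers of $B(\lambda)$. Nothing else should pose an obstacle.
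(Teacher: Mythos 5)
Your proof is correct; the paper actually states this proposition without a written proof, and your arguments are exactly the ones implicit in the surrounding text (the reflection-across-the-diagonal argument for part (1), the restatement for part (2), and the observation that both branching operations increase $h_{11}$ by one for part (3)). The only step you assert without justification is injectivity of the node-to-partition labelling in part (3), but this follows immediately from Remark \ref{remark:youngdiagramoppositeorder} (the node of $\lambda$ is labelled by $B(\lambda)^r$, and $\lambda \mapsto B(\lambda)^r$ is injective), which also gives surjectivity directly and lets you skip the counting argument entirely.
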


\begin{remark}
  \label{remark:Klein4action}
  From Proposition \ref{prop:basicpropertiesofYoungbinarytree} we see that we have an action of the Klein four group on the diagram in Figure \ref{fig:BranchingDiagram}. The operations come from the the operations of inverse (I) and reversal (R) on the corresponding sequences. We have seen that $R$ fixes the first row and column while replacing the rest with its complement inside a rectangle, whereas $R\circ I=I \circ R$ takes the transpose of the partition. Thus $I$ does both operations, taking the transpose and then performing the complement operation.
\end{remark}

\section{Palindrome Partitions}
Say a partition $\lambda \vdash n$ is a \emph{palindrome partition} if the sequence $B(\lambda)$ is a palindrome, i.e. $B(\lambda)=B(\lambda)^r$. For example in Figure \ref{fig:codingofpartitions}, we see that $B(55331)=01100110$, so $(5,5,3,1)$ is a palindrome partition. Define $PP(n)$ to be the number of palindrome partitions of $n$. Observe that the partitions $(n)$ and $(1^n)$ are palindromes, corresponding to the sequences of all ones and all zeros respectively, so for $n>1$ we have $PP(n) \geq 2$. Table \ref{tab:palindromesequence} gives the sequence $PP(n)$ up to $n=40$. After this paper appeared in preprint form, this sequence was added to the Online Encyclopedia of Integer Sequences \cite{oeis} as A368548. The values of $n+1$ where $PP(n)=2$ already suggests something interesting is happening!

\begin{table}[H] 
\centering

\caption{Count of Palindrome Partitions}

\begin{tabular}{cc@{\hspace{0.5in}}cc@{\hspace{0.5in}}cc@{\hspace{0.5in}}cc}
\toprule
$n$ & $PP(n)$ & $n$ & $PP(n)$ & $n$ & $PP(n)$ & $n$ & $PP(n)$ \\
\midrule
1 & 1 & 11 & 10 & 21 & 12 & 31 & 38 \\
2 & 2 & 12 & 2 & 22 & 2 & 32 & 34 \\
3 & 2 & 13 & 8 & 23 & 36 & 33 & 18 \\
4 & 2 & 14 & 10 & 24 & 12 & 34 & 46 \\
5 & 4 & 15 & 10 & 25 & 14 & 35 & 104 \\
6 & 2 & 16 & 2 & 26 & 24 & 36 & 2 \\
7 & 4 & 17 & 18 & 27 & 36 & 37 & 20 \\
8 & 4 & 18 & 2 & 28 & 2 & 38 & 46 \\
9 & 6 & 19 & 20 & 29 & 60 & 39 & 108 \\
10 & 2 & 20 & 16 & 30 & 2 & 40 & 2 \\
\bottomrule
\end{tabular}

\label{tab:palindromesequence} 
\end{table}

We will use the description in Proposition \ref{prop: describereversal} to compute a generating function $\sum_{n=0}^\infty PP(n)q^n$ for $PP(n)$. We will sum over all possible choices for $\lambda_1$ and $\lambda'_1$. Let ``Case 1" be when both both $\lambda_1$ and $\lambda'_1$ are odd, so $A$ and $B$ are both even. Let $A=2k$ and $B=2l$. We must count partitions $\mu$ in the $2k \times 2l$ rectangle so that $\mu$ is equal to its complement inside the rectangle. Dividing the large rectangle into 4 rectangles of size $k \times l$, it is clear that the upper left rectangle must be included in $\mu$ and the lower right rectangle must be empty. We can select an arbitrary partition $\tau$ for lower left rectangle, and then its complement in the $k \times l$ rectangle for the upper right, as shown on the left side of Figure \ref{fig:forgeneratingfunction}. We have $\mu$ occupying $2kl$ boxes. It is well know that there are $\binom{k+l}{k}$ choices for $\tau$ fitting inside a $k \times l$ rectangle. We also have $2k+2l+1$ boxes in the first row and column. Summing over all possible $k$ and $l$ we have a contribution to the generating function of:
\begin{equation}\label{eq:evenevencase}
  \sum_{k=0}^{\infty}\sum_{l=0}^{\infty}\binom{k+l}{k}q^{2kl+2k+2l+1}.
\end{equation}

Now suppose $\lambda_1$ is odd and $\lambda'_1$ is even, call this ``Case 2a." So we are considering $\mu$ inside a $2k+1 \times 2l$ box as shown on the right side of Figure \ref{fig:forgeneratingfunction}. It is clear that the shaded $1 \times l$ rectangle must be entirely included in $\mu$. If it were not there would be a shaded box in the $1 \times l$ rectangle to its right with an unshaded box to its left, and $\mu$ would not be a valid palindrome partition. So in this case we have $\mu$ filling up $(2k+1)l$ boxes,  $\binom{k+l}{k}$ choices for $\tau$ and $2k+2l+2$ boxes in the first row and column. Summing over all possible $k$ and $l$ we have a contribution to the generating function of:
\begin{equation}\label{eq:evenoddcase}
  \sum_{k=0}^{\infty}\sum_{l=0}^{\infty}\binom{k+l}{k}q^{2kl+2k+3l+2}.
\end{equation}

Now ``Case 2b" is when $\lambda_1$ is even and $\lambda'_1$ is odd. This would simply give the transpose of the diagram on the right of Figure \ref{fig:forgeneratingfunction}, and gives another contribution equal to \eqref{eq:evenoddcase}, as the sum is symmetric in $k$ and $l$. 

There is no ``Case 3" where both $\lambda_1$ and $\lambda'_1$ are even, as there can be no palindrome partitions as $\mu$ would need to fill half of the boxes in an odd by odd rectangle. The three cases are summarized in Table \ref{table:paritycases}.
{\renewcommand{\arraystretch}{1.2}
\begin{table}[H]
  \centering
  \caption{Possible parities for $\lambda_1$ and $\lambda'_1$}
  
\begin{tabular}{c@{\hspace{0.5in}}c@{\hspace{0.5in}}cc@{\hspace{0.5in}}c}
  \hline
  Case & $A= \lambda'_1-1=\#$ 0's & $B= \lambda_1-1=\#$ 1's & n & 2(n+1) \\
    \hline
  1 & 2k &2l  & 2k+2l+2kl+1 &4(k+1)(l+1) \\
  2a. & 2k+1 & 2l & 2k+3l+2kl+2&(2l+2)(2k+3) \\
    2b. & 2k & 2l+1 &3k+2l+2kl+2 &(2k+2)(2l+3) \\
  \hline
\end{tabular}

  \label{table:paritycases}
  \end{table}}

Thus we have proven:

\begin{theorem}
  \label{thm:GFforPP(n)}
We have the following generating function:
$$\sum_{n=0}^{\infty}PP(n)q^n=\sum_{k=0}^{\infty}\sum_{l=0}^{\infty}\binom{k+l}{k}q^{2kl+2k+2l+1} +   2\sum_{k=0}^{\infty}\sum_{l=0}^{\infty}\binom{k+l}{k}q^{2kl+2k+3l+2}.$$
\end{theorem}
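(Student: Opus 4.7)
The plan is to enumerate palindrome partitions $\lambda \vdash n$ by fixing the pair $(\lambda_1', \lambda_1)$, equivalently the counts $A = \lambda_1' - 1$ and $B = \lambda_1 - 1$ of zeros and ones in $B(\lambda)$. By Proposition \ref{prop: describereversal}, reversal of $B(\lambda)$ preserves the first row and column of $[\lambda]$ and replaces the sub-partition $\tilde{\lambda}$ (the boxes of $[\lambda]$ outside the first row and column) by its 180-degree-rotated complement inside the $A \times B$ rectangle. So $\lambda$ is a palindrome partition if and only if $\tilde{\lambda}$ equals its own rotated complement in that rectangle. I would organize the count by the parities of $A$ and $B$.

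If $A$ and $B$ are both odd, the $A \times B$ rectangle has a unique center box fixed by the 180-degree rotation; the rotated-complement condition would then require that box to be simultaneously inside and outside $\tilde{\lambda}$, so this case contributes nothing. If $A = 2k$ and $B = 2l$, I would partition the rectangle into four $k \times l$ blocks; the rotated complement swaps the upper-left with the lower-right and the lower-left with the upper-right. For $\tilde{\lambda}$ to be a Young diagram, the upper-left block must be full and the lower-right empty, while the lower-left may be an arbitrary partition $\tau$ fitting inside $k \times l$, which forces the upper-right to be the rotated complement of $\tau$. This gives $\binom{k+l}{k}$ choices, and the box count $|\tilde{\lambda}| + (A+B+1) = 2kl + 2k + 2l + 1$ matches the exponent in the first sum of the theorem.

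In the mixed parity case, say $A = 2k+1$ and $B = 2l$, the middle row (row $k+1$) is sent to itself by the rotation; the rotated-complement condition forces its left $l$ cells to lie in $\tilde{\lambda}$ and its right $l$ cells to lie outside. The bottom $k$ rows then form an arbitrary partition $\tau \subseteq k \times l$ (bounded above by the middle row), and the top $k$ rows are the rotated complement of $\tau$ shifted by $l$, which is automatically a valid partition shape. This yields $\binom{k+l}{k}$ choices with box count $2kl + l + (2k + 2l + 2) = 2kl + 2k + 3l + 2$; the symmetric subcase $A = 2k$, $B = 2l+1$ (obtained by conjugation) contributes the same sum, giving the factor of $2$. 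Summing the three contributions produces the stated generating function.

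The main obstacle to making this rigorous is the book-keeping in each case: one must verify that the proposed construction always yields a \emph{valid} Young diagram $\tilde{\lambda}$ (weakly decreasing rows and columns) and that no further constraints on $\tau$ are hidden. I would handle this by checking the monotonicity conditions along the boundaries between the sub-blocks; in the even-even case, for instance, the row lengths decrease from the full upper-left block across to the upper-right block (because the complement of a partition in a rectangle is itself a partition once rotated), and similarly from the upper-right block down to the lower-left block, so the full shape $\tilde{\lambda}$ is a partition precisely when $\tau$ is. The analogous check in the mixed case is what forces the middle row to split into a full left half and empty right half, which is the only nonobvious structural constraint in the argument.
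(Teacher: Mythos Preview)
Your proposal is correct and follows essentially the same approach as the paper: split by the parities of $A=\lambda_1'-1$ and $B=\lambda_1-1$, rule out the odd--odd case, and in the remaining cases decompose the $A\times B$ rectangle into sub-blocks so that the self-complementarity condition reduces to a free choice of a partition $\tau$ in a $k\times l$ box. If anything, your discussion of why the resulting $\tilde{\lambda}$ is automatically a valid Young diagram is more explicit than the paper's, which largely asserts these facts as ``clear.''
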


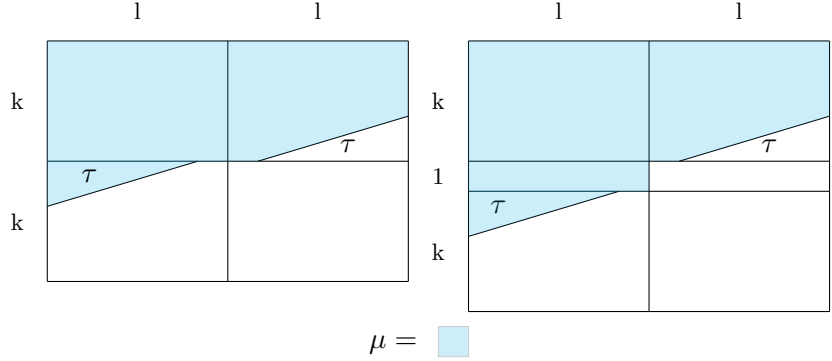
\begin{figure}[H]
\centering
   \begin{tikzpicture}[scale=0.4]
         \draw  (0,0) -- (0,8);
         \draw  (0,8) -- (12,8);
        \draw  (12,8) -- (12,0);
          \draw  (0,0) -- (12,0);
          \draw  (0,4) -- (12,4);
        \draw  (6,0) -- (6,8);
           \node at (-1,2){k};
            \node at (-1,6){k};
             \node at (3,9){l};
            \node at (9,9){l};
               \draw (0,2.5) -- (5,4);
                 \draw (7,4) -- (12,5.5);

  \fill[cyan, opacity=0.2] (0,4) rectangle (6,8);
    \fill[cyan, opacity=0.2] (0,2.5) --(0,4)--(5,4);
        \fill[cyan, opacity=0.2](7,4)--(6,4)--(6,8)--(12,8) --(12,5.5);

   \node at(11.5,-2){\large $\mu=$};
  
  \draw[fill=cyan, opacity=0.2](13,-1.5) rectangle (14,-2.5);

         \draw  (14,-1) -- (14,8);
         \draw  (14,8) -- (26,8);
        \draw  (26,8) -- (26,-1);
          \draw  (14,-1) -- (26,-1);
          \draw  (14,3) -- (26,3);
        \draw  (20,-1) -- (20,8);
           \node at (13,1){k};
            \node at (13,6){k};
            \node at (15,2.5){\large $\tau$};
            \node at (1.4,3.5){\large $\tau$};
             \node at (24,4.5){\large $\tau$};
          \node at (10,4.5){\large $\tau$};
             \node at (17,9){l};
            \node at (23,9){l};
               \draw (14,1.5) -- (19,3);
                 \draw (21,4) -- (26,5.5);
                  \draw  (14,4) -- (26,4);

 \node at (13,3.5){1};
  \fill[cyan, opacity=0.2] (14,3) rectangle (20,8);
    \fill[cyan, opacity=0.2] (14,1.5) --(14,3)--(19,3);
        \fill[cyan, opacity=0.2](21,4)--(20,4)--(20,8)--(26,8) --(26,5.5);
    \end{tikzpicture}
\caption{Counting Palindrome Partitions}
  \label{fig:forgeneratingfunction}
\end{figure}

We are grateful to James Sellers for pointing out a simplification to the generating function in Theorem \ref{thm:GFforPP(n)} using the binomial series $\frac{1}{(1-x)^{n+1}} = \sum_{j=0}^{\infty} \binom{n+j}{j}x^j$. In the first double sum we can pull out $q^{2k+1}$ term and set $x=q^{2k+2}$. In the second, we pull out $q^{2k+2}$ and set $x=q^{2k+3}$, to obtain:
\begin{corollary}

\begin{eqnarray*}\sum_{n=0}^{\infty}PP(n)q^n &=&\sum_{k=0}^{\infty}\left[ \frac{q^{2k+1}}{(1-q^{2k+2})^{k+1}}  + 2\frac{q^{2k+2}}{(1-q^{2k+3})^{k+1}} \right ]\\
&=&\sum_{k=0}^{\infty} \left[ q^k \Big(\frac{q}{1-q^{2k+2}}\Big)^{k+1} + 2\Big(\frac{q^{2}}{1-q^{2k+3}}\Big)^{k+1}\right].
\end{eqnarray*}

\end{corollary}

In Table \ref{table:paritycases} we added a fourth column showing the factorization of $2(n+1)$. We can now use this to explain why $PP(n)=2$ occurs only when $n=3$ or $n+1$ is prime.

\begin{theorem}
  \label{thm: PrimesinPP(n)}The number $PP(n)=2$ if and only if $n=3$ or $n+1>2$ is prime.
  \end{theorem}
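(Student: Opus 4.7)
The plan is to extract $PP(n)$ directly from the generating function of Theorem \ref{thm:GFforPP(n)} and convert $PP(n) = 2$ into a condition on $n+1$ via the last column of Table \ref{table:paritycases}. Every valid triple $(\text{case}, k, l)$ contributes $\binom{k+l}{k}\geq 1$ distinct palindrome partitions to $PP(n)$, so $PP(n) = 2$ holds precisely when exactly two such triples appear and each contributes a single partition.

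First I would locate the two trivial palindromes $(n)$ and $(1^n)$ inside this framework. Since they force $\lambda_1' = 1$ and $\lambda_1 = 1$ respectively, they satisfy $k = 0$ or $l = 0$ and so each contributes $\binom{k+l}{k} = 1$. For $n$ odd both live in Case~1 at $(k, l) = (0, (n-1)/2)$ and $((n-1)/2, 0)$; for $n$ even, $(1^n)$ sits in Case~2a at $((n-2)/2, 0)$ while $(n)$ sits in Case~2b at $(0, (n-2)/2)$. These triples are distinct for $n \geq 2$, which already gives $PP(n) \geq 2$; the remaining task is to identify exactly when no further $(k, l)$ contributes.

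The key step is to rewrite the three exponent equations using the last column of Table \ref{table:paritycases}:
\[
\text{Case 1: } n+1 = 2(k+1)(l+1), \qquad \text{Case 2a: } n+1 = (2k+3)(l+1), \qquad \text{Case 2b: } n+1 = (k+1)(2l+3).
\]
If $n$ is even, so $N := n+1$ is odd, Case~1 has no solutions, and every Case~2a or 2b solution is an ordered factorization of $N$ with one factor odd and $\geq 3$. The trivial palindromes use up the factorizations $N\cdot 1$ and $1\cdot N$; any further factorization $N = ab$ with $a,b \geq 3$ (necessarily both odd) produces an additional palindrome in each of Cases 2a and 2b. Hence $PP(n) = 2 \iff N$ is prime. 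If $n$ is odd, $N$ is even and the trivial palindromes live in Case~1. For $PP(n) = 2$ I need (i) no other Case~1 solutions, i.e.\ $N/2$ is $1$ or prime, and simultaneously (ii) no Case 2a or 2b solutions, i.e.\ $N$ has no odd divisor $\geq 3$, i.e.\ $N$ is a power of $2$. Together (i) and (ii) force $N = 2^m$ with $2^{m-1}$ prime, which happens only at $m = 2$, giving $n = 3$.

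The one piece of bookkeeping I expect to be most delicate is verifying that different valid triples really produce different palindrome partitions (so that any nontrivial factorization strictly increases $PP(n)$, even when only one of $k$ or $l$ is positive). This is immediate from the construction in the proof of Theorem \ref{thm:GFforPP(n)}: the case together with $(k, l)$ recovers $(\lambda_1, \lambda_1')$, and inside the $k\times l$ rectangle the $\binom{k+l}{k}$ choices of the interior partition $\tau$ yield $\binom{k+l}{k}$ genuinely distinct palindrome partitions. Combining the even and odd analyses then gives the claimed equivalence $PP(n) = 2 \iff n = 3$ or $n+1$ is prime.
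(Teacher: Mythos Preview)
Your proposal is correct and follows essentially the same approach as the paper: both pivot on rewriting the exponent equations from Theorem~\ref{thm:GFforPP(n)} as factorizations of $n+1$ (the last column of Table~\ref{table:paritycases}) and then checking when only the two trivial triples survive. The organization differs slightly---you split first on the parity of $n$ and then classify all solutions, whereas the paper splits first on whether $n+1$ is prime and then explicitly constructs an extra palindrome in each composite subcase---but the underlying idea and the case analysis are the same.
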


\begin{proof}

Consider the factorizations of $n+1$ in Table \ref{table:paritycases}. If $n+1$ is prime, we are either in Case 2a with $l=0$ or Case 2b with $k=0$, corresponding to the partitions $(1^n)$ and $(n)$ respectively. In this case there are no other palindrome partitions. Now suppose $n+1$ is not prime, so let $n+1=xy$ with $x, y>1$. We will explain how to find additional palindrome partitions of $n$.

If $n+1$ is odd, then so are $x$ and $y$. We can find $\lambda$ in Case 2a with $l=x-1\geq 2$ and $k=(y-3)/2\geq 0$. Then any of the $\binom{k+l+1}{k}$ choices for $\tau$ will produce a palindrome partition  $\lambda \vdash n$ not equal to $(n)$ or $(1^n)$. A corresponding choice in Case 2b will give the conjugate $\lambda'$. In the smallest example where $x=y=3$, we can choose $k=0$, $l=2$ or $k=2, l=0$ and we obtain  $\lambda=(5,3)$ and its conjugate $(2,2,2,1,1)$.

Finally suppose $n+1=2x$. If $x>2$ is even, we can choose $k=1$, $l=x/2-1$ in Case 1, to obtain a palindrome partition $\lambda \vdash n$ not equal to $(n)$ or $(1^n)$. If $x \geq 3$ is odd, we can choose $l=1$ and $k=(x-3)/2$ in Case 2a to obtain a palindrome partition $\lambda \vdash n$ not equal to $(n)$ or $(1^n)$.

This leaves only the special case where $n+1=4$, where we see that $n+1$ is not prime but the only palindrome partitions of $n=3$ are $(3)$ and $(1,1,1)$.
\end{proof}

\begin{remark}
  \label{remark: sequenceofnwithPP(n)=2}
  The sequence of integers $n$ with $PP(n)=2$, given by $\{2,3,4,6,10,12,16,18,22,\ldots\}$, is A068499 at oeis.org : ``Numbers $n$ such that $m!$ reduced modulo $(n+1)$ is not zero." Of course this differs in only two terms from the sequence of $n$ with $n+1$ prime.
\end{remark}

\section{Generating Palindrome Partitions of $n$}
We have seen that, given $n$, there are always two palindrome partitions of $n$ corresponding to sequences $B(\lambda)$ of length $n-1$, namely the all zero sequence corresponding to $(1^n)$ and the all one sequence corresponding to $(n)$. If we want to generate all palindrome partitions of $n$, we need to know what other sequence lengths can occur, and for each length, how many zeros and ones are included. This will specify the first part $\lambda_1$ and the first column $\lambda'_1$, and values for $k$ and $l$. We simply choose all possible $\mu$ as in the proof of Theorem \ref{thm:GFforPP(n)}, to obtain a list of all palindrome partitions of $n$.
{\renewcommand{\arraystretch}{1.3}
\begin{table}[H]
  \centering
  \caption{Palindrome partitions of $n=11$}
\begin{tabular}{ccccccccccc}
  \hline
  $\lambda$& 11 & 7,4 & 5,5,1 & 5,4,2 & 5,3,3 & $3^3,1^2$ & $3^2,2^2,1$ & $3,2^4$ & $2^4,1^3$ & $1^{11}$ \\\hline
  $B(\lambda)$ & $1^{10}$ & $1^301^3$ & $01^4 0$ & 101101 & 110011 & 001100 & 010010 & $10^41$ & $0^310^3$ & $0^{10}$\\
  $l(B(\lambda))$ & 10 & 7 & 6 & 6 & 6 & 6 & 6 & 6 & 7 & 10 \\
  \hline
\end{tabular}

  \label{table:palindromesofn=11}
  \end{table}
}

\begin{definition}
  \label{def:palindromelength}
  Let $PL(n)$ be the number of lengths among all sequences $B(\lambda)$ as $\lambda$ runs over all palindrome partitions of $n$. This is also the number of distinct perimeters among all palindrome partitions of $n$.
\end{definition}

So in Table \ref{table:palindromesofn=11}, we see that $PL(11)=3$ corresponding to the lengths 6,7, 10. For a given length $m$, we have $m=A+B$, where the sequence has $A$ zeros and $B$ ones. Recall that swapping zeros and ones gives a sequence also corresponding to a palindrome partition of $n$. We now classify precisely which lengths occur and prove that, for each possible length, the split into zeros and ones is unique (up to swapping zeros and ones).

\begin{theorem}
\label{thm: palindrome lengths}
$PL(n)$ is the number of factorizations $xy=2(n+1)$ where $0< x \leq y \leq n$. This is the sequence A211270 in \cite{oeis}, shifted by one. Moreover, suppose there is a palindrome partition $\lambda \vdash n$ with $B(\lambda)$ having length $m=A+B$ with $A$ zeros and $B$ ones. Then any other palindrome partition $\mu \vdash n$ with $B(\mu)$ of length $m$ must have $A$ zeros and $B$ ones or $B$ zeros and $A$ ones.

\end{theorem}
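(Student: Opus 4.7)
The plan is to extract a single key identity from the case analysis already done in Section~3, and derive both claims from it via Vieta's formulas. From Table~\ref{table:paritycases}, writing $A = \lambda'_1 - 1$ for the number of zeros in $B(\lambda)$ and $B = \lambda_1 - 1$ for the number of ones, a direct per-case inspection yields the uniform identity
\[
(A+2)(B+2) = 2(n+1).
\]
Indeed in Case 1, $(A+2)(B+2) = (2k+2)(2l+2) = 4(k+1)(l+1)$, matching the last column; in Cases 2a and 2b, $(2k+3)(2l+2)$ and $(2k+2)(2l+3)$ likewise agree with the table. Note that $A+2, B+2 \geq 2$, and since $2(n+1)$ is even, $A+2$ and $B+2$ cannot both be odd.

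For the uniqueness statement, suppose $\lambda, \mu \vdash n$ are palindrome partitions whose sequences $B(\lambda), B(\mu)$ both have length $m$, with $(A_1, B_1)$ and $(A_2, B_2)$ counting zeros and ones respectively. Then $(A_1+2) + (B_1+2) = m+4 = (A_2+2)+(B_2+2)$, and by the identity above $(A_1+2)(B_1+2) = 2(n+1) = (A_2+2)(B_2+2)$. Two pairs of positive integers with equal sum and equal product coincide as multisets, hence $\{A_1, B_1\} = \{A_2, B_2\}$.

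For the enumeration, send a palindrome partition to the unordered factorization $\{A+2, B+2\}$ of $2(n+1)$, written as $x \leq y$ with $2 \leq x \leq y$. The length $m = x + y - 4$ is determined by this factorization (by the sum-product argument just given), so the map length-to-factorization is well-defined and injective on distinct lengths. Surjectivity is a parity split: given $xy = 2(n+1)$ with $2 \leq x \leq y$, if both $x, y$ are even set $k = (x-2)/2, l = (y-2)/2$ and apply Case~1 with, say, empty $\tau$; if exactly one of $x, y$ is odd (both odd is impossible as $xy$ is even), choose the corresponding $k, l$ in Case~2a or 2b to produce a palindrome partition realizing length $x+y-4$. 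This establishes the bijection $PL(n) \leftrightarrow \{(x,y) : xy = 2(n+1),\ 2 \leq x \leq y\}$, and the correspondence to A211270 follows by matching the indexing conventions of the OEIS entry.

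The only real obstacle is verifying the identity $(A+2)(B+2) = 2(n+1)$ uniformly across the three cases; once that is in hand, the whole theorem is a pure Vieta argument on the pair of equations $x+y = m+4$ and $xy = 2(n+1)$, together with a painless existence check in each parity class.
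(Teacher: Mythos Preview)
Your proof is correct and is, in fact, cleaner than the paper's. Both arguments rest on the factorization data in Table~\ref{table:paritycases}, but you extract the uniform identity $(A+2)(B+2)=2(n+1)$ explicitly and then dispatch both the uniqueness claim and the length--factorization bijection with a single Vieta argument (equal sum $m+4$ and equal product $2(n+1)$ force equal unordered pairs). The paper instead keeps Cases~1 and~2 separate and, for uniqueness, sets up $k+l=\tilde k+\tilde l$ together with the corresponding product equality in each case, then performs ad hoc algebraic manipulations to factor and conclude $\{k,l\}=\{\tilde k,\tilde l\}$. Your consolidation buys brevity and makes the ``sum and product determine the pair'' structure transparent; the paper's version buys nothing extra but is more explicit about the parameters $k,l$ used elsewhere.

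One point worth flagging: your bijection lands on $\{(x,y):xy=2(n+1),\ 2\le x\le y\}$, which is equivalent to $0<x\le y\le n+1$ (since $x\ge 2\iff y\le n+1$), and this is exactly what A211270 at index $n+1$ counts. The theorem as stated has $y\le n$, which for e.g.\ $n=11$ would exclude the factorization $2\times 12$ corresponding to the length-$10$ sequences for $(11)$ and $(1^{11})$; the OEIS reference confirms your bound is the intended one. You might note this discrepancy explicitly rather than leaving it to ``matching the indexing conventions.''
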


\begin{proof}
   Suppose we have a palindrome partition $\lambda$ of $n$ where $B(\lambda)$ has length $m=A+B$ as in Table \ref{table:paritycases}. Thus $k$ and $l$ are determined and we obtain a factorization as desired from the final column. 
   
   Conversely, suppose we have such a factorization $xy=2(n+1)$. We prove there is a unique corresponding length $m$ and decomposition $m=A+B$. It is the case that we have one odd factor or no odd factor.
    If one of the factors is odd, say $x$, then $x= 2l+3$ and $y = 2k+2$, as in case $2$. Then it follows that $A = 2k = y-2$ and $B = 2l = x-3$.
    If neither factor is odd, then $x=2l+2$ and $y = 2k+2$ which is case $1$. So $A=x-2$ and $B=y-2$, and we have a palindrome partition with $B(\lambda)$ of length $A+B$.
    Suppose that for a factorization of $2(n+1)$ we have that both $x$ and $y$ are even where $m = 2k+2l$ and also $m=2\tilde{k}+2\tilde{l}$ for some $k,l, \tilde{k}, \tilde{l}$ where $k \neq \tilde{k}$ and $l \neq \tilde{l}$. It could be the case that $k = \tilde{l}$ and $l = \tilde{k}$, which would be interchanging the zeros and ones. Consider a nontrivial solution where $k+l = \tilde{k} + \tilde{l}$. Then $2(n+1) = (2k+2)(2l+2)=(2\tilde{k}+2)(2\tilde{l}+2)$ which simplifies to $kl=\tilde{k} \tilde{l}$. Then 
    \begin{align*}
        kl & = \tilde{k}\tilde{l} \\
         kl + l^2 & = \tilde{k}\tilde{l} +l^2 \\
         l(k+l) & = \tilde{k}\tilde{l} +l^2 \\
         l(\tilde{k}+\tilde{l}) & = \tilde{k}\tilde{l} +l^2 \\
         (l-\tilde{l})(\tilde{k}-l) &= 0.
    \end{align*}
    Thus $l = \tilde{l}$ or $\tilde{k} = l$. Now suppose for a factorization of $2(n+1)$ that $x$ is odd without loss of generality, which would mean $m = 2k+2l+1$ and $m=2\tilde{k}+2\tilde{l}+1$ for some $k,l, \tilde{k}, \tilde{l}$ where $k \neq \tilde{k}$ and $l \neq \tilde{l}$. Again, the case that $k = \tilde{l}$ and $l = \tilde{k}$, which would be interchanging the zeros and ones. Suppose we have a nontrivial solution where $k+l = \tilde{k} + \tilde{l}$. Then $2(n+1) = (2k+2)(2l+3)=(2\tilde{k}+2)(2\tilde{l}+3)$ which simplifies to $2kl+k=2 \tilde{k} \tilde{l} + \tilde{k}$. Then 
    \begin{align*}
        2kl+k & = 2\tilde{k}\tilde{l} + \tilde{k}\\
         2kl+k-k+2k^2 & = 2\tilde{k}\tilde{l}+\tilde{k}-k+2k^2 \\
         2k(l+k) &= 2\tilde{k}\tilde{l}+\tilde{k}-k+2k^2 \\
         2k(\tilde{l}+\tilde{k}) &= 2\tilde{k}\tilde{l}+\tilde{k}-k+2k^2 \\
         (\tilde{k}-k)(2k-1-2\tilde{l}) &= 0. \\
    \end{align*}
    Thus $k = \tilde{k}$ or $2k = 2\tilde{l}+1$.
    Therefore, it's the case that for each word length of $m$, there is a unique number of zeros and ones, up to swapping zeros and ones.
\end{proof}

\begin{example}
\label{ex:pp29}
We demonstrate how to use the results above to efficiently generate all $PP(29)=60$, palindrome partitions of $n=29$. There are five relevant factorizations of $2(n+1)=30$. For each, we can compute $k,l,A,B$ as in the proof above, using Table \ref{table:paritycases}. Once we have these, the first row $\lambda_1$ and column $\lambda'_1$ are fixed. Then we have $\binom{k+l}{l}$ choices for the partition $\tau$ as in Figure \ref{fig:forgeneratingfunction}. Table \ref{table:generatepalindromeof29} shows the results, where the entries in the penultimate column sum to 60 as expected.

\end{example}

\begin{table}[H]
  \centering
  \caption{Generating Palindrome Partitions of 29}
 {\renewcommand{\arraystretch}{1.3} 
\begin{tabular}{c@{\hspace{0.2in}}c@{\hspace{0.2in}}c@{\hspace{0.2in}}c@{\hspace{0.2in}}c@{\hspace{0.2in}}c@{\hspace{0.2in}}c}
  \toprule
 $60=x\cdot y$&l & k &\# of zeros& \# of ones& \# of $\lambda$& Example \\ \midrule
 $2\cdot 30$ & 0 & 14& 0&28 & 1& $(29)$\\ 
  $2\cdot 30$ & 14 & 0& 28&0 & 1&$(1^{29})$\\ 
    $3\cdot 20$ & 9 & 0& 1&18 & 1&$(19,10)$\\ 
      $3\cdot 20$ & 0 & 9& 18&1 & 1&$(2^{10},1^9)$\\ 
           $4\cdot 15$ & 6 & 1& 2&13 & $\binom{7}{1}$&$(14,14,1)$\\
                $4\cdot 15$ & 1 & 6& 13&2 & $\binom{7}{1}$&$(3,2^{13})$\\
                       $5\cdot 12$ & 5 & 1& 3&10 & $\binom{6}{1}$&$(11,11,6,1)$\\
                        $5\cdot 12$ & 1 & 5& 10&3 & $\binom{6}{1}$&$(4,3^5, 2^5)$\\
                         $6\cdot 10$ & 4 & 2& 4&8 & $\binom{6}{2}$&$(9^3,1,1)$\\
                         $6\cdot 10$ & 2 & 4& 8&4 & $\binom{6}{2}$&$(5^5,1^4)$\\

  \bottomrule
\end{tabular}}

  \label{table:generatepalindromeof29}
  \end{table}

\begin{remark}
    In Example \ref{ex:pp29} we generate palindrome partitions of $n$ sorted by divisors of $2n+1$. Chai Wah Wu has used this idea to give an expression for $PP(n)$, which has the flavor of a divisor sum, and can be found in the OEIS entry for $PP(n)$.
\end{remark}

\section{Partitions with weight fixed by reversal}
As we mentioned in Remark \ref{remark:Klein4action}, sequence reversal gives an action of order two on all partitions given by $\lambda \rightarrow P(B(\lambda)^r)$. The palindrome partitions are fixed by this operation. In this section, we consider partitions where $\lambda$ and $P(B(\lambda)^r)$ are both partitions of the same integer. In other words, these partitions  remain in the same row of Figure \ref{fig:BranchingDiagram} under this action. Let $R(n)$ be the number of partitions $\lambda \vdash n$ where $P(B(\lambda)^r) \vdash n$.

Computing $R(n)$ is similiar to counting $PP(n)$, but in Figure \ref{fig:forgeneratingfunction} we no longer have the requirement that the partition be equal to the rotation of its complement. Instead we require that it fill exactly half of the boxes in the $2k \times 2l$ or $(2k +1) \times 2l$ rectangle. (As with the palindrome case, if the rectangle has an odd number of boxes then there are no partitions to count).

Recall that the q-binomial coefficient ${n \brack k}_q$ is the generating function for partitions that fit inside an $(n-k) \times k$ rectangle, and is called a Gaussian polynomial. The coefficient of $q^m$ counts partitions of $m$ fitting inside a $(n-k) \times k$ rectangle, so it has leading term $q^{k(n-k)}$. In place of the binomial coefficients in Theorem \ref{thm:GFforPP(n)}, we will need the coefficient of the middle degree term in this polynomial. There is a nice description of this coefficient in the case either $k$ or $n-k$ is even, which we prove here.

\begin{definition}
\label{def:Tnk}
Let $T(n,k)$ be the number of nondecreasing sequences of length n, with integer entries in $[-k,k]$, summing to zero.
\end{definition}
For example if $n=5$ and $k=4$ a possible sequence would be $\{-4,-2,1,1,4\}$. The array $T(n,k)$ is given as sequence A183917 in the OEIS.

\begin{proposition}
\label{prop: middle coef q binomial} The coefficient of $q^{kl}$ in the Gaussian polynomial ${2k+l \brack l}_q$ is $T(l,k)$.
\end{proposition}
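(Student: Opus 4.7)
The strategy is a direct bijection; no generating-function manipulation is needed. By the convention recalled just before the proposition, ${2k+l \brack l}_q$ is the generating function for partitions $\mu = (\mu_1, \ldots, \mu_l)$ fitting inside a $2k \times l$ rectangle, that is, $2k \ge \mu_1 \ge \mu_2 \ge \cdots \ge \mu_l \ge 0$, weighted by $q^{|\mu|}$. The maximal degree that occurs is $(2k)\cdot l$, so $q^{kl}$ is the middle-degree term, and its coefficient counts such partitions with $|\mu| = kl$.

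I would then exhibit an explicit ``reverse and shift'' bijection from this set to the sequences counted by $T(l,k)$. Given such a $\mu$, define $a_i := \mu_{l+1-i} - k$ for $i = 1, \ldots, l$. Reversing the indices turns the weakly decreasing $\mu$ into a weakly increasing sequence, the shift by $-k$ puts each entry into $[-k, k]$ (since $\mu_j \in [0, 2k]$), and the total becomes $\sum_i a_i = |\mu| - lk = 0$. Thus the output is a valid sequence of the type counted by $T(l,k)$. The inverse map sends $a_1 \le \cdots \le a_l$ with entries in $[-k,k]$ summing to zero to the partition $\mu_i := a_{l+1-i} + k$; the defining conditions on the sequence translate precisely back to the partition inequalities and to $|\mu| = kl$. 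The two constructions are manifestly mutually inverse, which proves the claim.

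The main ``obstacle'' is really only bookkeeping: matching the paper's stated convention for which dimension of the rectangle corresponds to which index in ${n \brack k}_q$, and confirming that $kl$ is indeed the middle degree of ${2k+l \brack l}_q$. Once those are pinned down, the bijection and its verification are immediate one-liners.
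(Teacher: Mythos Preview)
Your proposal is correct and is essentially the same argument as the paper's: both identify the coefficient of $q^{kl}$ with partitions of $kl$ having at most $l$ parts each at most $2k$, and both use the reverse-and-shift bijection $\mu \mapsto (\mu_l - k, \mu_{l-1} - k, \ldots, \mu_1 - k)$ to the sequences counted by $T(l,k)$. Your write-up is slightly more explicit about the inverse map and the verification of the defining conditions, but the content is identical.
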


\begin{proof}
    The coefficient in question counts all partitions of $kl$ which fit inside an $l \times 2k$ rectangle, i.e. have at most $l$ parts, each less than or equal to $2k$. We give a bijection to the sequences counted by  $T(l,k)$. Given such a partition $\lambda=(\lambda_1, \lambda_2, \ldots, \lambda_l) \vdash kl$, the corresponding sequence is $\{\lambda_l-k, \lambda_{l-1}-k, \lambda_{l-2}-k, \ldots, \lambda_1-k\}$.  For example the sequence above corresponds to the partition $(8,5,5,2,0)$ inside the $5 \times 8$ rectangle.
\end{proof}

We can now write the generating function for $R(n)$ as in Theorem \ref{thm:GFforPP(n)}. The first term is from the $2k \times 2l$ rectangle so we get $T(2l,k)$. The second term is the $(2k+1) \times 2l$ rectangle, which has the same count as a $2l \times (2k+1)$ rectangle, namely $T(2k+1,l)$. Thus: 

\begin{theorem}
  \label{thm:GFforR(n)}
We have the following generating function:
$$\sum_{n=0}^{\infty}R(n)q^n=\sum_{k=0}^{\infty}\sum_{l=0}^{\infty}T(2l,k)q^{2kl+2k+2l+1} +   2\sum_{k=0}^{\infty}\sum_{l=0}^{\infty}T(2k+1,l)q^{2kl+2k+3l+2}.$$
\end{theorem}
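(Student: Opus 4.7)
The plan is to mirror the proof of Theorem \ref{thm:GFforPP(n)}, weakening one constraint. By Proposition \ref{prop: describereversal}, the operation $\lambda \mapsto P(B(\lambda)^r)$ fixes the first row and first column of $\lambda$ and replaces the remaining shape $\tilde\lambda$, which sits inside an $A \times B$ rectangle, by the 180 degree rotation of its rectangular complement. Rotation preserves cardinality, so $\lambda$ and $P(B(\lambda)^r)$ partition the same integer if and only if $|\tilde\lambda|=\tfrac{1}{2}AB$. In particular $AB$ must be even, which rules out the case where both $\lambda_1$ and $\lambda'_1$ are even, exactly as in the palindrome count.

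I would then reuse the exact case split of Table \ref{table:paritycases}. The box counts from the first row, first column, and $|\tilde\lambda|=\tfrac{1}{2}AB$ are unchanged from the palindrome setting, so the $q$-exponents $2kl+2k+2l+1$ and $2kl+2k+3l+2$ carry over verbatim. The only modification is to replace the binomial coefficient $\binom{k+l}{k}$, which previously counted the very rigid choices for $\tilde\lambda$ (equal to the rotation of its complement), by the number of partitions of weight $\tfrac{1}{2}AB$ that merely fit inside the $A \times B$ rectangle. In Case 1, with $A=2k$, $B=2l$, this is the coefficient of $q^{2kl}$ in ${2k+2l \brack 2l}_q$, which equals $T(2l,k)$ by Proposition \ref{prop: middle coef q binomial} applied with its variable ``$l$'' replaced by $2l$. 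In Case 2a, with $A=2k+1$, $B=2l$, I would first conjugate so the partitions sit inside a $2l \times (2k+1)$ rectangle; the required half-area count is then the middle coefficient of ${2l+(2k+1) \brack 2k+1}_q$, which is again accessible via Proposition \ref{prop: middle coef q binomial} with the even side $2l$ playing the role of ``$2k$''. Case 2b (with $A$ even and $B$ odd) is obtained from Case 2a by transposing every Young diagram, a weight-preserving bijection that contributes an identical sum and explains the factor of $2$.

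The only substantive checkpoint is justifying the application of Proposition \ref{prop: middle coef q binomial} in Case 2a, where the rectangle has an odd side; this is precisely why I would rotate the rectangle so that the even side is cast as ``$2k$'' in the proposition before reading off the middle coefficient, and it is also why the excluded Case 3 (both $\lambda_1,\lambda_1'$ even, forcing $AB$ odd) contributes nothing. Assembling the three cases then yields the stated generating function, in direct parallel with the assembly at the end of the proof of Theorem \ref{thm:GFforPP(n)}.
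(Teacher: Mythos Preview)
Your outline is exactly the argument the paper gives: reuse the case split of Table~\ref{table:paritycases}, keep the $q$-exponents from Theorem~\ref{thm:GFforPP(n)}, and replace each $\binom{k+l}{k}$ by the middle coefficient of the appropriate Gaussian polynomial, identified via Proposition~\ref{prop: middle coef q binomial} (conjugating in Case~2 so the even side plays the role of ``$2k$'').

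One caution worth making explicit. In Case~2a the rectangle is $(2k{+}1)\times 2l$, and after conjugating you are reading the coefficient of $q^{l(2k+1)}$ in ${2l+(2k{+}1)\brack 2k{+}1}_q$. Applying Proposition~\ref{prop: middle coef q binomial} with its ``$k$'' equal to $l$ and its ``$l$'' equal to $2k{+}1$ gives $T(2k{+}1,l)$, not the $T(k{+}1,l)$ printed in the statement. The printed value appears to be a typo: for instance, at $n=14$ the relevant $(k,l)$ pairs in Case~2 are $(6,0),(1,2),(0,4)$, and $2\bigl(T(13,0)+T(3,2)+T(1,4)\bigr)=2(1+5+1)=14=R(14)$, whereas $2\bigl(T(7,0)+T(2,2)+T(1,4)\bigr)=2(1+3+1)=10$. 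So your method is correct and matches the paper's, but carrying it through carefully corrects the stated coefficient to $T(2k{+}1,l)$.
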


\begin{table}[H] 
\centering

\caption{Partitions with weight fixed by reversal}

\begin{tabular}{cc@{\hspace{0.5in}}cc@{\hspace{0.5in}}cc@{\hspace{0.5in}}cc}
\toprule
$n$ & $R(n)$ & $n$ & $R(n)$ & $n$ & $R(n)$ & $n$ & $R(n)$ \\
\midrule
1 & 1 & 11 & 10 & 21 & 12 & 31 & 76 \\
2 & 2 & 12 & 2 & 22 & 2 & 32 & 90 \\
3 & 2 & 13 & 8 & 23 & 52 & 33 & 18 \\
4 & 2 & 14 & 14 & 24 & 28 & 34 & 198 \\
5 & 4 & 15 & 10 & 25 & 14 & 35 & 320 \\
6 & 2 & 16 & 2 & 26 & 52 & 36 & 2 \\
7 & 4 & 17 & 20 & 27 & 80 & 37 & 20 \\
8 & 4 & 18 & 2 & 28 & 2 & 38 & 142 \\
9 & 6 & 19 & 28 & 29 & 120 & 39 & 388 \\
10 & 2 & 20 & 28 & 30 & 2 & 40 & 2 \\
\bottomrule
\end{tabular}

\label{tab:RPsequence} 
\end{table}

Of course $PP(n) \leq R(n)$. Comparing Tables \ref{tab:palindromesequence} and \ref{tab:RPsequence}, we see the smallest $n$ with $PP(n)<R(n)$ is $n=14$, where $PP(14)=10$ and $R(14)=14$. The four extra partitions which are not palindromes are $\lambda=(44222), R(\lambda)=(43331)$ and $\mu=(5522), R(\mu)=(5441)$.

\begin{remark}
    The proof of Theorem \ref{thm: PrimesinPP(n)} applies to the more general setting of computing $R(n)$, i.e. $R(n)=2$ precisely when $P(n)=2$.
\end{remark}

\section{Problems}

We suggest some problems for future research. 
\begin{problem}
    Can we write the generating functions in Theorem \ref{thm:GFforPP(n)} and \ref{thm:GFforR(n)} in a more compact form $\sum f(n)q^n$? Can we see clearly from the generating function why $f(n)=2$ when $n+1$ is prime? 
\end{problem}

\begin{problem}
    The traditional Young's lattice has all partitions of $n$ in row $n$ with edges corresponding to removing and/or adding a single box. Classically this describes the branching of irreducible representations of the symmetric group. Does Figure \ref{fig:BranchingDiagram} have any representation- theoretic interpretation?
\end{problem}

\begin{problem}
    Proposition \ref{prop: describereversal} gives a quick way to identify palindrome partitions $\lambda$ without calculating $B(\lambda)$. Can one do the same for fractions in the Calkin-Wilf tree corresponding to palindrome sequences, i.e. determine if a fraction corresponds to a palindrome without doing the continued fraction expansion? Can we describe what the reversal operation does to fractions?
\end{problem}

\section{Acknowledgements}
The authors would like to thank Matthew Just and Robert Schneider for suggesting looking at this operation on partitions and William Keith for help with generating functions.

\bibliography{PalindromesReferences}
\end{document}